\documentclass{article}
\usepackage{xcolor}
\usepackage[a4paper,margin=1in]{geometry}
\usepackage{amsmath,amssymb,amsthm,mathtools}
\usepackage{enumitem}
\usepackage[colorlinks=true,linkcolor=blue,citecolor=blue,urlcolor=blue]{hyperref}
\newtheorem{theorem}{Theorem}[section]
\newtheorem{proposition}[theorem]{Proposition}
\newtheorem{lemma}[theorem]{Lemma}

\newtheorem{conjecture}[theorem]{Conjecture}

\newcommand{\OO}{\mathcal{O}}
\title{Sign law for Ramanujan's third order mock theta function $\rho(q)$}
\author{
Manosij Ghosh Dastidar\\
Institute of Discrete Mathematics and Geometry\\
TU Wien\\
}

\begin{document}

\maketitle

\begin{abstract}
We study the coefficients of Ramanujan's third order mock theta function
\[
\rho(q)=\sum_{m\geq 0}
\frac{q^{2m(m+1)}}{(1+q+q^2)(1+q^3+q^6)\cdots(1+q^{2m+1}+q^{4m+2})}
=\sum_{n\geq 0}r(n)q^n.
\]
Numerical evidence suggests the striking sign pattern
\[
r(3n)>0,\qquad r(3n+1)\leq 0,\qquad r(3n+2)\leq 0.
\]
We prove an asymptotic form of this phenomenon. More precisely, using Watson's relation between $\rho(q)$ and $\omega(q)$, together with a Rademacher-type expansion for the coefficients of $\omega(q)$ and the corresponding expansion for a theta--eta product, we show that
\[
r(n)\sim \kappa_{n\bmod 3}\,
\frac{2\pi}{(12n+8)^{1/4}}
I_{1/2}\!\left(\frac{\pi\sqrt{12n+8}}{18}\right),
\]
where
\[
\kappa_0=\frac13\cos\frac{\pi}{18}>0,
\qquad
\kappa_1=-\frac13\sin\frac{2\pi}{9}<0,
\qquad
\kappa_2=-\frac13\sin\frac{\pi}{9}<0.
\]
Consequently,
\[
r(3n)>0,
\qquad
r(3n+1)<0,
\qquad
r(3n+2)<0
\]
for all sufficiently large $n$. 
\end{abstract}

\section{Introduction}

The mock theta functions were first introduced to the world by the last letter Ramanujan wrote to Hardy from his deathbed. Since then to the present day, mock theta functions have occupied a central place in the theory of $q$-series, partitions, harmonic Maass forms etc; see, for instance, \cite{RamanujanLostNotebook1988,Watson1936,Zwegers2002,Zagier2009,GordonMcIntosh2012}. Among the third order mock theta functions is 
\begin{equation}\label{eq:rho-def}
\rho(q)=\sum_{m\geq 0}
\frac{q^{2m(m+1)}}{(1+q+q^2)(1+q^3+q^6)\cdots(1+q^{2m+1}+q^{4m+2})}.
\end{equation}
Write
\[
\rho(q)=\sum_{n\geq 0}r(n)q^n.
\]
The first few coefficients are
\[
1,-1,0,1,0,-1,1,-1,0,1,-1,0,2,-1,-1,1,-1,-1,2,-1,0,\ldots.
\]
These coefficients suggest a surprisingly rigid sign law:
\[
r(n)>0\quad(n\equiv 0\pmod 3),
\qquad
r(n)<0\quad(n\equiv 1,2\pmod 3),
\]
apart from a small number of early zeros in the negative residue classes.

The purpose of this note is to prove the asymptotic version of this sign law. Our proof belongs to the circle of ideas surrounding asymptotic and exact formulas for mock theta coefficients, beginning with work of Dragonette and Andrews and continuing through the harmonic-Maass-form approach of Bringmann and Ono \cite{Dragonette1952,Andrews1966,BringmannOno2006,BringmannOno2010}. The exact finite cut-off remains to be determined separately. Experimentally, the only zeros in the negative residue classes appear to be
\[
r(2)=r(4)=r(8)=r(11)=r(20)=0.
\]
Thus the natural finite conjecture is the following.

\begin{conjecture}\label{conj:finite-rho}
Let $\rho(q)=\sum_{n\geq 0}r(n)q^n$ be defined by \eqref{eq:rho-def}. Then
\[
r(3n)>0\qquad(n\geq 0),
\]
\[
r(3n+1)\leq 0\qquad(n\geq 0),
\]
and
\[
r(3n+2)\leq 0\qquad(n\geq 0).
\]
Moreover the only zeros in the last two families are
\[
r(2)=r(4)=r(8)=r(11)=r(20)=0.
\]
\end{conjecture}

Our main theorem proves the eventual part of this conjecture.

\begin{theorem}\label{thm:eventual-sign}
Let $\rho(q)=\sum_{n\geq 0}r(n)q^n$. Then, for all sufficiently large $n$,
\[
r(3n)>0,
\qquad
r(3n+1)<0,
\qquad
r(3n+2)<0.
\]
More precisely, as $n\to\infty$,
\begin{equation}\label{eq:rho-main-asymptotic}
r(n)
=
\kappa_{n\bmod 3}
\frac{2\pi}{(12n+8)^{1/4}}
I_{1/2}\!\left(\frac{\pi\sqrt{12n+8}}{18}\right)
+
\OO\!\left(e^{\pi\sqrt{12n+8}/24}\right),
\end{equation}
where
\[
\kappa_0=\frac13\cos\frac{\pi}{18},
\qquad
\kappa_1=-\frac13\sin\frac{2\pi}{9},
\qquad
\kappa_2=-\frac13\sin\frac{\pi}{9}.
\]
\end{theorem}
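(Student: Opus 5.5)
We start from Watson's identity linking $\rho(q)$ to $\omega(q)$. With $\omega(q)=\sum_{m\ge0} q^{2m(m+1)}/(q;q^2)_{m+1}^2=\sum a_\omega(n)q^n$, Watson's relation reads
\[
q^{2/3}\rho(q)\ \text{(suitably normalized)}\;=\;\frac{1}{3}\bigl(\text{linear combination of } \omega(\zeta q^{1/3}) \text{ over cube roots}\bigr)+ c\,\frac{\text{theta}}{\text{eta}}(q)
\]
(in Watson's form: $2\rho(q)+\omega(q)=3\,(q^3;q^3)_\infty^3/(q;q)_\infty(q^2;q^2)_\infty\cdots$, i.e. $\omega(q)$ differs from $-2\rho(q)$ by a theta--eta product $T(q)$). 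So the plan is
\[
r(n)=-\tfrac12 a_\omega(n)+\tfrac12 t(n),\qquad T(q)=\sum t(n)q^n,
\]
and it suffices to produce asymptotics for $a_\omega(n)$ and $t(n)$ separately.

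For $a_\omega(n)$ we invoke the Rademacher-type exact formula of Bringmann--Ono \cite{BringmannOno2010}. This is the Andrews--Dragonette type series in $\sqrt{12n+8}$ with Kloosterman-type sums $K_\omega(n,k)$ and Bessel functions $I_{1/2}(\pi\sqrt{12n+8}/(6k))$ (for $\omega$ the relevant exponent is $n+2/3$, whence $12n+8$). The terms with $k=1,2$ carry growth $e^{\pi\sqrt{12n+8}/6}$ and $e^{\pi\sqrt{12n+8}/12}$. These must cancel against $t(n)$, since $r(n)$ grows only like $e^{\pi\sqrt{12n+8}/18}$.

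For $t(n)$ we apply the circle method (or Rademacher's formula for eta-quotients of nonpositive weight) to $T$. Its principal parts at cusps of denominators $k=1,2$ reproduce exactly the $k=1,2$ Bessel terms of $a_\omega$, and this cancellation is what Watson's identity encodes. We verify it by comparing the modular transformations of $\omega$ and $T$ at the cusps $0$ and $1/2$.

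The surviving dominant term comes from $k=3$. Since $I_{1/2}(x)=\sqrt{2/(\pi x)}\sinh x$, this contributes $\frac{2\pi}{(12n+8)^{1/4}}I_{1/2}(\pi\sqrt{12n+8}/18)$ times $-\tfrac12 K_\omega(n,3)/3$ plus the analogous theta--eta contribution. All terms with $k\ge4$, together with the Mordell-integral error terms, are $\OO(e^{\pi\sqrt{12n+8}/24})$, uniformly by absolute convergence of the tail. The coefficient $\kappa_{n\bmod 3}$ is then an explicit finite sum over $h\in\{1,2\}$ of roots of unity, namely Dedekind-sum multipliers times $e^{-2\pi i h n/3}$. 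It depends only on $n\bmod 3$, and evaluating it gives $\frac13\cos(\pi/18)$, $-\frac13\sin(2\pi/9)$, $-\frac13\sin(\pi/9)$ (angles $\pi/18\cdot\{1,4+\ldots\}$ arising from $e^{\pi i s(h,3)}$ shifted by $8/(12\cdot3)$). Nonvanishing of the $\kappa_j$ then yields the eventual signs, because the Bessel term dominates the error.

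The main obstacle is the exact cancellation at $k=1,2$ and the bookkeeping of multipliers at $k=3$. If the transformation bookkeeping becomes unwieldy, a fallback is to work directly with $\rho$ via its own transformation law from Watson's relation. In that approach we show that $\rho$ is bounded near the cusps $0$ and $1/2$, so that its only exponential singularities lie at cusps with $3\mid k$. We then run the circle method only on Farey arcs near $h/3$, using $e^{\pi\sqrt{12n+8}/18}$ versus $e^{\pi\sqrt{12n+8}/24}$ from $k=4$ as the gap.
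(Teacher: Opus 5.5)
Your outline has the same architecture as the paper's proof: Watson's identity $2r(n)=t(n)-w(n)$, separate Rademacher/circle-method expansions for $w(n)$ and $t(n)$, exact cancellation at $k=1$, nothing at $k=2$, the main term at $k=3$, and everything else at the $k\ge4$ scale. But it stops exactly where the content of the theorem lies.

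To begin with, the one explicit input you write down is wrong. Watson's theta--eta product is $T(q)=3\psi(q^3)^2/(q^2;q^2)_\infty=3(q^6;q^6)_\infty^4/\bigl((q^3;q^3)_\infty^2(q^2;q^2)_\infty\bigr)$, not $3(q^3;q^3)_\infty^3/\bigl((q;q)_\infty(q^2;q^2)_\infty\bigr)$. The latter has $T(e^{-t})\sim 2\sqrt{\pi/3}\,t^{-1/2}e^{\pi^2/(12t)}$ rather than $\frac{\sqrt\pi}{2}t^{-1/2}e^{\pi^2/(12t)}$, so the $k=1$ terms would not cancel. Also, saying the $k=1,2$ terms \emph{must} cancel because $r(n)$ grows only like $e^{\pi\sqrt{12n+8}/18}$ is circular. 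The paper checks this directly: $A_1(n\mid\omega)=B_1(n)=\frac14$; $A_2(n\mid\omega)=0$ by pairwise cancellation of the admissible $b$; and $T(-e^{-t})=O(t^{-1/2})$ because the exponential rates at $q=-1$ sum to zero.

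More seriously, the values of $\kappa_j$ are asserted, not derived; the parenthetical about angles shifted by $8/(12\cdot 3)$ is not a computation. What is needed, and what the paper does, has two parts.
\begin{itemize}
\item On the $\omega$ side, evaluate the $k=3$ Kloosterman sum. For each $n\bmod 6$ there is exactly one admissible $b\bmod 18$, giving $A_3(n\mid\omega)=-\frac12\cos\frac{\pi}{18},\ \frac12\sin\frac{2\pi}{9},\ \frac12\sin\frac{\pi}{9}$ for $n\equiv0,1,2\pmod 3$.
\item On the $T$ side, at $q=\zeta e^{-t}$ one has $(q^6;q^6)_\infty^4/(q^3;q^3)_\infty^2\sim\pi/(6t)$, while $(q^2;q^2)_\infty$ carries the eta multiplier $e^{-\pi i s(2,3)}=e^{\pi i/18}$. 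Hence $T(\zeta e^{-t})\sim\frac{\sqrt{3\pi}}{2}e^{-\pi i/18}t^{-1/2}e^{\pi^2/(108t)}$ and $B_3(n)/3=\frac12\cos\bigl(\frac{2\pi n}{3}+\frac{\pi}{18}\bigr)=-A_3(n\mid\omega)$.
\end{itemize}
Only this comparison shows that the $\omega$ and $T$ contributions at $k=3$ reinforce rather than (partially) cancel. That gives $2r(n)$ the $k=3$ coefficient $-\frac43A_3(n\mid\omega)$, hence $\kappa_j=-\frac23A_3(j\mid\omega)$. Without it you know neither the values nor even the signs of the $\kappa_j$. Your fallback of running the circle method on $\rho$ directly needs exactly the same local constants.

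Finally, the $k\ge4$ bound cannot be justified by absolute convergence. In weight $1/2$ the trivial bound on the Kloosterman sums only makes the terms $O(c^{-1+\varepsilon})$, and these series converge at best conditionally. Instead, truncate the circle method at $k\asymp\sqrt n$ and bound the remaining principal parts and the non-principal error directly. The paper itself simply imports this tail bound from the cited expansions.
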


The proof uses Watson's identity \cite{Watson1936}
\[
2\rho(q)+\omega(q)=T(q)
\]
to reduce the asymptotic sign law to a comparison of the first non-cancelling Rademacher terms of $\omega(q)$ and of a theta--eta product $T(q)$.

\section{Watson's identity}

We now use a classical relation of Watson \cite{Watson1936}. Let
\[
\omega(q)=\sum_{m\geq 0}
\frac{q^{2m(m+1)}}{(1-q)^2(1-q^3)^2\cdots(1-q^{2m+1})^2}
=\sum_{n\geq 0}w(n)q^n.
\]
Watson's identity gives
\begin{equation}\label{eq:watson}
2\rho(q)+\omega(q)=T(q),
\end{equation}
where
\begin{equation}\label{eq:T-def}
T(q)=3\left[\frac12q^{-3/8}\theta_2(0,q^{3/2})\right]^2
\prod_{r\geq 1}(1-q^{2r})^{-1}.
\end{equation}
Write
\[
T(q)=\sum_{n\geq 0}t(n)q^n.
\]
Then \eqref{eq:watson} gives the coefficient relation
\begin{equation}\label{eq:coeff-relation}
2r(n)=t(n)-w(n).
\end{equation}

We record a more elementary form of $T(q)$. Recall Ramanujan's theta function
\[
\psi(q)=\sum_{j\geq 0}q^{j(j+1)/2}=\frac{(q^2;q^2)_\infty^2}{(q;q)_\infty}.
\]
Using the standard normalization of $\theta_2$,
\[
\frac12q^{-3/8}\theta_2(0,q^{3/2})=\sum_{j\geq 0}q^{3j(j+1)/2}=\psi(q^3).
\]
Therefore
\begin{equation}\label{eq:T-psi}
T(q)=3\frac{\psi(q^3)^2}{(q^2;q^2)_\infty}.
\end{equation}
Equivalently, with $q=e^{2\pi i\tau}$,
\begin{equation}\label{eq:T-eta}
T(q)=3q^{-2/3}\frac{\eta(6\tau)^4}{\eta(3\tau)^2\eta(2\tau)}.
\end{equation}

\begin{proof}[Proof of \eqref{eq:T-eta}]
From \eqref{eq:T-psi} and the product formula for $\psi(q)$,
\[
T(q)=3\frac{(q^6;q^6)_\infty^4}{(q^3;q^3)_\infty^2(q^2;q^2)_\infty}.
\]
Using
\[
(q^M;q^M)_\infty=q^{-M/24}\eta(M\tau),
\]
we get
\[
(q^6;q^6)_\infty^4=q^{-1}\eta(6\tau)^4,
\]
while
\[
(q^3;q^3)_\infty^2(q^2;q^2)_\infty
=q^{-1/4}q^{-1/12}\eta(3\tau)^2\eta(2\tau)
=q^{-1/3}\eta(3\tau)^2\eta(2\tau).
\]
Thus
\[
T(q)=3q^{-1+1/3}\frac{\eta(6\tau)^4}{\eta(3\tau)^2\eta(2\tau)}
=3q^{-2/3}\frac{\eta(6\tau)^4}{\eta(3\tau)^2\eta(2\tau)}.
\]
\end{proof}

\section{The Rademacher terms}

The coefficients of $\omega(q)$ admit a Rademacher-type expansion. Exact and Rademacher-type formulas for mock theta coefficients are now classical in this setting; for $\omega(q)$ see, in particular, Garthwaite and the recent formulas of Andersen--Anderson \cite{Garthwaite2008,AndersenAnderson2025}. In the normalization used here, the expansion has the form
\begin{equation}\label{eq:omega-rademacher}
w(n)=
\frac{2\pi}{d_n^{1/4}}
\sum_{c>0}\frac{A_c(n\mid\omega)}{c}
I_{1/2}\!\left(\frac{\pi\sqrt{d_n}}{6c}\right),
\qquad d_n=12n+8.
\end{equation}
The relevant Kloosterman sum is
\begin{equation}\label{eq:omega-Ac}
A_c(n\mid\omega)=
-\frac12(-1)^n\sqrt{\frac c3}
\sum_{\substack{b\bmod 6c\\ b^2\equiv -d_n\; (24c)}}
\left(\frac{-3}{b}\right)
\sin\left(-\frac{\pi b}{6c}\right).
\end{equation}
Here $\left(\frac{-3}{b}\right)$ denotes the Kronecker symbol.

We need only the first three values of $c$.

\begin{lemma}\label{lem:omega-kloosterman-first-values}
Let
\[
d_n=12n+8.
\]
For the Kloosterman sums
\[
A_c(n\mid\omega)=
-\frac12(-1)^n\sqrt{\frac c3}
\sum_{\substack{b\bmod 6c\\ b^2\equiv -d_n\; (24c)}}
\left(\frac{-3}{b}\right)
\sin\left(-\frac{\pi b}{6c}\right),
\]
we have
\[
A_1(n\mid\omega)=\frac14,
\qquad
A_2(n\mid\omega)=0,
\]
and
\[
A_3(n\mid\omega)=
\begin{cases}
-\dfrac12\cos(\pi/18),& n\equiv0\pmod3,\\[6pt]
\dfrac12\sin(2\pi/9),& n\equiv1\pmod3,\\[6pt]
\dfrac12\sin(\pi/9),& n\equiv2\pmod3.
\end{cases}
\]
\end{lemma}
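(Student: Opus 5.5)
The plan is to evaluate each of the three Kloosterman sums by direct finite enumeration. For $c\in\{1,2,3\}$ the sum in \eqref{eq:omega-Ac} runs over $b\in\{0,1,\dots,6c-1\}$, and I will read these as literal representatives. This reading matters: replacing $b$ by $b+6c$ changes $b^2$ by $12c(b+3c)$, which need not vanish modulo $24c$. The congruence condition depends on $n$ only through $-d_n=-12n-8 \bmod 24c$. That quantity is determined by $n \bmod 2c$, so $A_c(n\mid\omega)$, together with the factor $(-1)^n$, is periodic in $n$ with period dividing $2c$. In each case I will list the admissible $b$, evaluate $\left(\frac{-3}{b}\right)$ and $\sin(-\pi b/6c)$, and simplify.

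For $c=1$ the condition is $b^2\equiv -d_n \pmod{24}$, and $-d_n$ is $16$ for $n$ even and $4$ for $n$ odd. The squares of $0,\dots,5$ modulo $24$ are $0,1,4,9,16,1$. So the only admissible $b$ are $b=4$ for even $n$ and $b=2$ for odd $n$.
\begin{itemize}
\item For even $n$, $b=4$ gives $\left(\frac{-3}{4}\right)\sin(-2\pi/3)=-\sqrt3/2$.
\item For odd $n$, $b=2$ gives $\left(\frac{-3}{2}\right)\sin(-\pi/3)=(-1)(-\sqrt3/2)=\sqrt3/2$.
\end{itemize}
In both cases the sign $(-1)^n$ compensates exactly, and multiplying by $-\tfrac12\sqrt{1/3}$ gives $A_1(n\mid\omega)=\tfrac14$.

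For $c=2$ I will work modulo $48$ with $b\in\{0,\dots,11\}$ and $n \bmod 4$. Since $-d_n\equiv 4\pmod{12}$, the admissible $b$ are even. I expect them to come in pairs (roughly $b$ and $12-b$, or $b$ and $b\pm 6$) whose contributions cancel. For $b=2a$ with $a$ odd, the Kronecker symbol is $-\left(\frac{-3}{a}\right)$, and the sines of complementary angles coincide, so the cancellation should come from the character values. If no clean pairing appears, I will simply tabulate the four residue classes of $n \bmod 4$ and check that each sum is $0$.

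For $c=3$ the modulus is $72$, $b$ ranges over $0,\dots,17$, and $n$ is taken modulo $6$. In each of the six classes there should be a small number of admissible $b$ (even, and not divisible by $3$). Their contributions are $\pm\sin(\pi b/18)$, with signs coming from $\left(\frac{-3}{b}\right)$ and $(-1)^n$. The key step is to show that the resulting combination depends only on $n \bmod 3$ and reduces to a single trigonometric value.
\begin{itemize}
\item For $n\equiv0 \pmod 3$, the angles should combine (e.g.\ $\sin(4\pi/9)=\cos(\pi/18)$) to $-\tfrac12\cos(\pi/18)$.
\item For $n\equiv1 \pmod 3$, I expect $\tfrac12\sin(2\pi/9)$.
\item For $n\equiv2 \pmod 3$, I expect $\tfrac12\sin(\pi/9)$.
\end{itemize}
The prefactor $\sqrt{3/3}=1$ makes the normalization clean. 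This case is where I expect the main obstacle: there are more solutions, and the bookkeeping of Kronecker signs on even $b$ must be done carefully. Any combination of several sines must also be reduced to the stated closed form, and identities such as $\sin(\pi/9)+\sin(2\pi/9)=\sin(4\pi/9)$ may be needed.
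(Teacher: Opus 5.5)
Your plan is the same direct enumeration the paper carries out, and it goes through. The $c=3$ case is easier than you anticipate: for $n\equiv 0,1,2,3,4,5 \pmod 6$ there is exactly one admissible $b$, namely $b=8,14,16,10,4,2$ respectively, so no sine-addition identities are needed. For $c=2$, your observation that the data depend on $n \bmod 4$ (solutions $b\in\{4,8\}$ for $n\equiv 2$, $b\in\{2,10\}$ for $n\equiv 3$, none otherwise) is correct and more accurate than the paper's table, which is indexed by $n\bmod 6$ and is wrong for some classes, e.g.\ $n=6,7$. Both versions still give $A_2=0$, through the cancellation $b\leftrightarrow 12-b$ that you describe.
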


\begin{proof}
We evaluate the defining finite sums for \(c=1,2,3\).

First take \(c=1\). Then \(b\) runs modulo \(6\), subject to
\[
b^2\equiv -d_n\pmod{24}.
\]
If \(n\) is even, then
\[
d_n=12n+8\equiv 8\pmod{24},
\]
so
\[
-d_n\equiv 16\pmod{24}.
\]
Among \(b=0,1,\ldots,5\), the only solution is \(b=4\). Hence
\[
\left(\frac{-3}{4}\right)=1,
\qquad
\sin\left(-\frac{4\pi}{6}\right)
=
-\frac{\sqrt3}{2}.
\]
Since \(n\) is even, the prefactor is
\[
-\frac12(-1)^n\sqrt{\frac13}
=
-\frac12\sqrt{\frac13}.
\]
Therefore
\[
A_1(n\mid\omega)
=
-\frac12\sqrt{\frac13}
\left(-\frac{\sqrt3}{2}\right)
=
\frac14.
\]

If \(n\) is odd, then
\[
d_n\equiv20\pmod{24},
\]
so
\[
-d_n\equiv4\pmod{24}.
\]
The only solution modulo \(6\) is \(b=2\). In this case
\[
\left(\frac{-3}{2}\right)=-1,
\qquad
\sin\left(-\frac{2\pi}{6}\right)
=
-\frac{\sqrt3}{2}.
\]
Thus the summand is \(\sqrt3/2\). Since \(n\) is odd, the prefactor is
\[
-\frac12(-1)^n\sqrt{\frac13}
=
\frac12\sqrt{\frac13}.
\]
Hence
\[
A_1(n\mid\omega)
=
\frac12\sqrt{\frac13}\cdot \frac{\sqrt3}{2}
=
\frac14.
\]
This proves
\[
A_1(n\mid\omega)=\frac14.
\]

Next take \(c=2\). We must solve
\[
b^2\equiv -d_n\pmod{48},
\qquad b\bmod 12.
\]
Checking \(n\) modulo \(6\), the admissible residue classes are
\[
\begin{array}{c|c}
n\bmod 6 & b\bmod 12\\ \hline
0&\varnothing\\
1&\varnothing\\
2&4,8\\
3&2,10\\
4&\varnothing\\
5&\varnothing
\end{array}
\]
If \(n\equiv2\pmod6\), the two solutions are \(b=4,8\). For \(b=4\),
\[
\left(\frac{-3}{4}\right)=1,
\qquad
\sin\left(-\frac{4\pi}{12}\right)
=
-\frac{\sqrt3}{2},
\]
so the contribution is \(-\sqrt3/2\). For \(b=8\),
\[
\left(\frac{-3}{8}\right)=-1,
\qquad
\sin\left(-\frac{8\pi}{12}\right)
=
-\frac{\sqrt3}{2},
\]
so the contribution is \(\sqrt3/2\). These cancel.

If \(n\equiv3\pmod6\), the two solutions are \(b=2,10\). For \(b=2\),
\[
\left(\frac{-3}{2}\right)=-1,
\qquad
\sin\left(-\frac{2\pi}{12}\right)
=
-\frac12,
\]
so the contribution is \(1/2\). For \(b=10\),
\[
\left(\frac{-3}{10}\right)=1,
\qquad
\sin\left(-\frac{10\pi}{12}\right)
=
-\frac12,
\]
so the contribution is \(-1/2\). These cancel. Therefore
\[
A_2(n\mid\omega)=0.
\]

Finally take \(c=3\). We must solve
\[
b^2\equiv -d_n\pmod{72},
\qquad b\bmod 18.
\]
A direct check gives
\[
\begin{array}{c|c}
n\bmod 6 & b\bmod 18\\ \hline
0&8\\
1&14\\
2&16\\
3&10\\
4&4\\
5&2
\end{array}
\]
For \(c=3\), the prefactor is
\[
-\frac12(-1)^n\sqrt{\frac33}
=
-\frac12(-1)^n.
\]

We now compute the six cases.

If \(n\equiv0\pmod6\), then \(b=8\), and the prefactor is \(-1/2\). Also
\[
\left(\frac{-3}{8}\right)=-1,
\qquad
\sin\left(-\frac{8\pi}{18}\right)
=
-\sin\left(\frac{4\pi}{9}\right)
=
-\cos\left(\frac{\pi}{18}\right).
\]
Thus the summand is \(\cos(\pi/18)\), and hence
\[
A_3(n\mid\omega)
=
-\frac12\cos\left(\frac{\pi}{18}\right).
\]

If \(n\equiv1\pmod6\), then \(b=14\), and the prefactor is \(1/2\). Also
\[
\left(\frac{-3}{14}\right)=-1,
\qquad
\sin\left(-\frac{14\pi}{18}\right)
=
-\sin\left(\frac{7\pi}{9}\right)
=
-\sin\left(\frac{2\pi}{9}\right).
\]
Thus the summand is \(\sin(2\pi/9)\), and hence
\[
A_3(n\mid\omega)
=
\frac12\sin\left(\frac{2\pi}{9}\right).
\]

If \(n\equiv2\pmod6\), then \(b=16\), and the prefactor is \(-1/2\). Also
\[
\left(\frac{-3}{16}\right)=1,
\qquad
\sin\left(-\frac{16\pi}{18}\right)
=
-\sin\left(\frac{8\pi}{9}\right)
=
-\sin\left(\frac{\pi}{9}\right).
\]
Therefore
\[
A_3(n\mid\omega)
=
\frac12\sin\left(\frac{\pi}{9}\right).
\]

If \(n\equiv3\pmod6\), then \(b=10\), and the prefactor is \(1/2\). Also
\[
\left(\frac{-3}{10}\right)=1,
\qquad
\sin\left(-\frac{10\pi}{18}\right)
=
-\sin\left(\frac{5\pi}{9}\right)
=
-\cos\left(\frac{\pi}{18}\right).
\]
Hence
\[
A_3(n\mid\omega)
=
-\frac12\cos\left(\frac{\pi}{18}\right).
\]

If \(n\equiv4\pmod6\), then \(b=4\), and the prefactor is \(-1/2\). Also
\[
\left(\frac{-3}{4}\right)=1,
\qquad
\sin\left(-\frac{4\pi}{18}\right)
=
-\sin\left(\frac{2\pi}{9}\right).
\]
Thus
\[
A_3(n\mid\omega)
=
\frac12\sin\left(\frac{2\pi}{9}\right).
\]

If \(n\equiv5\pmod6\), then \(b=2\), and the prefactor is \(1/2\). Also
\[
\left(\frac{-3}{2}\right)=-1,
\qquad
\sin\left(-\frac{2\pi}{18}\right)
=
-\sin\left(\frac{\pi}{9}\right).
\]
Thus the summand is \(\sin(\pi/9)\), and therefore
\[
A_3(n\mid\omega)
=
\frac12\sin\left(\frac{\pi}{9}\right).
\]

Combining the six cases gives the simpler residue class form
\[
A_3(n\mid\omega)=
\begin{cases}
-\dfrac12\cos(\pi/18),& n\equiv0\pmod3,\\[6pt]
\dfrac12\sin(2\pi/9),& n\equiv1\pmod3,\\[6pt]
\dfrac12\sin(\pi/9),& n\equiv2\pmod3.
\end{cases}
\]
The proof is complete.
\end{proof}

It follows from \eqref{eq:omega-rademacher} and Lemma \ref{lem:omega-kloosterman-first-values} that
\begin{equation}\label{eq:omega-truncated}
w(n)=
\frac{2\pi}{d_n^{1/4}}
\left[
\frac14 I_{1/2}\!\left(\frac{\pi\sqrt{d_n}}6\right)
+
\frac{A_3(n\mid\omega)}{3}
I_{1/2}\!\left(\frac{\pi\sqrt{d_n}}{18}\right)
\right]
+
\OO\!\left(e^{\pi\sqrt{d_n}/24}\right).
\end{equation}
The error term begins at the $c=4$ scale.

The eta-quotient $T(q)$ has an analogous expansion. We now give the multiplier calculation, since this is animportant step in the proof. We use only the standard Rademacher expansion theorem for eta-quotients, as in Sussman's treatment \cite{Sussman2017}; all local constants needed for the terms $c=1,2,3$ are computed below.
\begin{lemma}[Root-of-unity asymptotics for the eta quotient]\label{lem:T-local}
Let
\[
T(q)=3\frac{(q^6;q^6)_\infty^4}
{(q^3;q^3)_\infty^2(q^2;q^2)_\infty}.
\]
As \(t\to0^+\), we have
\[
T(e^{-t})
=
\frac{\sqrt{\pi}}{2}t^{-1/2}
\exp\!\left(\frac{\pi^2}{12t}\right)(1+O(t)),
\]
\[
T(-e^{-t})=O(t^{-1/2}),
\]
and, with \(\zeta=e^{2\pi i/3}\),
\[
T(\zeta e^{-t})
=
\frac{\sqrt{3\pi}}{2}e^{-\pi i/18}t^{-1/2}
\exp\!\left(\frac{\pi^2}{108t}\right)(1+O(t)),
\]
\[
T(\zeta^2 e^{-t})
=
\frac{\sqrt{3\pi}}{2}e^{\pi i/18}t^{-1/2}
\exp\!\left(\frac{\pi^2}{108t}\right)(1+O(t)).
\]
\end{lemma}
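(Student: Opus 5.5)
We work directly with the product form
\[
T(q)=3\frac{(q^6;q^6)_\infty^4}{(q^3;q^3)_\infty^2(q^2;q^2)_\infty},
\]
and for each root of unity $\xi=e^{2\pi i h/k}$ with $k\in\{1,2,3\}$ we estimate each Pochhammer factor $(\xi^M e^{-Mt};\xi^M e^{-Mt})_\infty$ as $t\to0^+$.

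For a single factor, the tool is the Dedekind eta transformation. Write $q^M=e^{2\pi i M\tau}$ with $\tau=h/k+it/(2\pi)$. Reduce $Mh/k$ to lowest terms $h'/k'$, where $k'=k/\gcd(M,k)$, and apply $\eta(\gamma z)=\epsilon(\gamma)(-i(cz+d))^{1/2}\eta(z)$ with a matrix $\gamma$ sending $h'/k'$ to $i\infty$. This gives
\[
(\xi^M e^{-Mt};\xi^M e^{-Mt})_\infty=\omega_{h',k'}\,\Big(\frac{2\pi}{k'Mt}\Big)^{1/2}\exp\Big(-\frac{\pi^2}{6k'^2Mt}+\frac{Mt}{24}\Big)\bigl(1+O(e^{-4\pi^2/(k'^2Mt)})\bigr),
\]
where $\omega_{h',k'}=e^{\pi i s(h',k')}$ is the usual Dedekind-sum multiplier. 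Equivalently, one can use the standard formula $\log(\zeta e^{-t};\zeta e^{-t})_\infty$ via Mellin transforms. The exponential rate of $T$ at $\xi$ is then
\[
\frac{\pi^2}{6t}\Bigl(-\frac{4}{6k_6^2}+\frac{2}{3k_3^2}+\frac{1}{2k_2^2}\Bigr),
\]
where $k_M=k/\gcd(M,k)$. The power of $t$ is $t^{-2/2+2/2+1/2}\cdot t^{0}$ up to constants, i.e. $t^{-1/2}$ overall, since the weights give $4/2-2/2-1/2$ in the denominator.

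The exponents in the four cases are as follows.
\begin{itemize}
\item[$k=1$:] all $k_M=1$, so the rate is $\frac{\pi^2}{6t}(-\frac23+\frac23+\frac12)=\frac{\pi^2}{12t}$. The constant is $3(2\pi/6)^2/\bigl((2\pi/3)(2\pi/2)^{1/2}\bigr)=\frac{\sqrt\pi}{2}t^{-1/2}$ after collecting powers. This matches the statement, and the $1+O(t)$ comes from the factors $e^{Mt/24}$.
\item[$k=2$:] $k_6=1$, $k_3=2$, $k_2=1$, so the rate is $\frac{\pi^2}{6t}(-\frac23+\frac16+\frac12)=0$. Hence $T(-e^{-t})=O(t^{-1/2})$ and no multiplier is needed.
\item[$k=3$:] $k_6=k_3=1$ and $k_2=3$, so the rate is $\frac{\pi^2}{6t}(-\frac23+\frac23+\frac1{18})=\frac{\pi^2}{108t}$. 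The factors $(\zeta^6;\cdot)$ and $(\zeta^3;\cdot)$ are just $(e^{-6t})$ and $(e^{-3t})$ with no phase. The only nontrivial phase comes from $(\zeta^2e^{-2t};\zeta^2e^{-2t})_\infty$, i.e. $h'/k'=2/3$. The modulus constant picks up the extra factor $(k')^{1/2}=\sqrt3$ relative to the $k=1$ computation, giving $\frac{\sqrt{3\pi}}{2}$.
\end{itemize}

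The main obstacle is pinning down the phase $e^{-\pi i/18}$ for $\zeta$ correctly, with the right sign convention. I would compute it directly rather than trust the general multiplier table. Write
\[
(\zeta^2 x;\zeta^2 x)_\infty=\prod_n(1-\zeta^{2n}x^n)
\]
with $x=e^{-2t}$, and split $n$ by residue mod $3$. This gives
\[
(x^3;x^3)_\infty\prod_{n\equiv1}(1-\zeta^2x^n)\prod_{n\equiv2}(1-\zeta x^n).
\]
Apply Euler–Maclaurin to $\sum_{n\equiv a\,(3)}\log(1-\alpha x^n)$ with $\alpha=\zeta^{\pm1}$. The leading $1/t$ terms combine through $\mathrm{Li}_2(\zeta)+\mathrm{Li}_2(\zeta^2)=-\pi^2/9\cdot\frac12\ldots$, which reproduces the exponential rate. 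The constant term is $\sum_a(\tfrac12-\tfrac a3)\log(1-\zeta^{\cdot})$ type sums, i.e. $\log\Gamma$-like Hurwitz values. Evaluating these constant terms gives modulus $\sqrt3$-type factors and argument $\pi/18$, and I would cross-check the result against the Dedekind sum $s(2,3)=-1/18$, since $e^{\pi i s}$ with $s=\mp1/18$ gives exactly $e^{\mp\pi i/18}$ after inversion.

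The $\zeta^2$ case follows by complex conjugation, since $T$ has real coefficients. Finally, the $O(t)$ relative errors come from the $e^{Mt/24}$ factors and the exponentially small corrections.
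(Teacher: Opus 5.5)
Your argument is correct, but it follows a different route from the paper's. You push every factor $(q^M;q^M)_\infty$ through the Dedekind eta transformation at the reduced cusp $h'/k'$, with $k_M=k/\gcd(M,k)$. All four expansions then come from the same exponent bookkeeping plus one multiplier, $s(2,3)$. The paper uses no general transformation law:
\begin{itemize}
\item at $q\to1$ it uses the ordinary eta asymptotic;
\item at $q=-1$ it uses the product identity $(-x;-x)_\infty=(x^2;x^2)_\infty^3/\bigl((x;x)_\infty(x^4;x^4)_\infty\bigr)$, which reduces back to $q\to1$;
\item at the cube roots it splits the product into residue classes mod $3$ and applies Euler--Maclaurin, using $\mathrm{Li}_2(\zeta)+\mathrm{Li}_2(\zeta^2)=-\pi^2/9$ and the $B_1$-weighted constants $(1-z)^{1/2-\alpha/3}$. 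This is the computation you keep only as a cross-check for the phase.
\end{itemize}
Your approach is uniform in $k$, which is what a full Rademacher expansion and its $c\ge4$ tail would require. It also gives errors of the form $e^{O(t)}\bigl(1+O(e^{-c/t})\bigr)$ instead of just $1+O(t)$. And it yields exact constants where the paper only states a bound, e.g.\ $T(-e^{-t})\sim\sqrt{\pi/t}$. The paper's approach avoids multiplier conventions entirely, but needs a separate trick for each root of unity.

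One point to fix concerns the sign of the multiplier. As you display it, $(q;q)_\infty$ is multiplied by $\omega_{h',k'}=e^{\pi i s(h',k')}$, and that has the convention backwards. In the Rademacher--Apostol normalization, $e^{\pi i s(h,k)}$ multiplies $1/(q;q)_\infty$, so $(q;q)_\infty$ itself carries $e^{-\pi i s(h',k')}$. Check: $s(1,3)=1/18$, while $(\zeta e^{-u};\zeta e^{-u})_\infty$ has phase $e^{-\pi i/18}$. Used literally, your display would give $e^{+\pi i/18}$ in $T(\zeta e^{-t})$.

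Your ``after inversion'' cross-check implicitly uses the correct convention, and the direct computation settles it. The constant terms give $(1-\zeta^2)^{1/6}(1-\zeta)^{-1/6}=e^{\pi i/18}$ for $(\zeta^2x;\zeta^2x)_\infty$, so its reciprocal contributes $e^{-\pi i/18}$.

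Smaller slips:
\begin{itemize}
\item $\mathrm{Li}_2(\zeta)+\mathrm{Li}_2(\zeta^2)$ equals $-\pi^2/9$ exactly; there is no extra factor $\tfrac12$.
\item Those Euler--Maclaurin constant terms have modulus $1$ and involve no $\log\Gamma$ or Hurwitz values. The $\sqrt3$ comes from $(x^3;x^3)_\infty=(e^{-6t};e^{-6t})_\infty$.
\item The $t$-exponent should read $-4/2+2/2+1/2=-1/2$, not $-2/2+2/2+1/2$. Your conclusion $t^{-1/2}$ is nevertheless right.
\end{itemize}
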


\begin{proof}
We use the standard eta-asymptotic
\[
(e^{-at};e^{-at})_\infty
=
\left(\frac{2\pi}{at}\right)^{1/2}
\exp\!\left(-\frac{\pi^2}{6at}\right)(1+O(t))
\qquad(t\to0^+),
\]
valid for fixed \(a>0\).

First take \(q=e^{-t}\). Then
\[
(q^6;q^6)_\infty^4
=
\left(\frac{2\pi}{6t}\right)^2
\exp\!\left(-\frac{4\pi^2}{36t}\right)(1+O(t)),
\]
\[
(q^3;q^3)_\infty^2
=
\frac{2\pi}{3t}
\exp\!\left(-\frac{2\pi^2}{18t}\right)(1+O(t)),
\]
and
\[
(q^2;q^2)_\infty
=
\left(\frac{2\pi}{2t}\right)^{1/2}
\exp\!\left(-\frac{\pi^2}{12t}\right)(1+O(t)).
\]
Substitution into the expression for \(T(q)\) gives
\[
T(e^{-t})
=
\frac{\sqrt{\pi}}{2}t^{-1/2}
\exp\!\left(\frac{\pi^2}{12t}\right)(1+O(t)).
\]

Next consider \(q=-e^{-t}\). Then
\[
q^6=e^{-6t},\qquad q^2=e^{-2t},\qquad q^3=-e^{-3t}.
\]
We use
\[
(-e^{-v};-e^{-v})_\infty
=
\frac{(e^{-2v};e^{-2v})_\infty^3}
{(e^{-v};e^{-v})_\infty(e^{-4v};e^{-4v})_\infty}.
\]
Applying the ordinary eta-asptotic to the right-hand side gives
\[
(-e^{-v};-e^{-v})_\infty
=
O\!\left(
v^{-1/2}
\exp\!\left(-\frac{\pi^2}{24v}\right)
\right).
\]
With \(v=3t\), this shows that the possible exponential growth in the quotient defining \(T(-e^{-t})\) cancels. Hence
\[
T(-e^{-t})=O(t^{-1/2}).
\]

It remains to treat the primitive cubic roots. All fractional powers below are taken using the principal branch of the logarithm.

We first record the root-of-unity asymptotic needed for the factor
\[
(\zeta e^{-u};\zeta e^{-u})_\infty.
\]
As \(u\to0^+\),
\[
(\zeta e^{-u};\zeta e^{-u})_\infty
=
e^{-\pi i/18}
\left(\frac{2\pi}{3u}\right)^{1/2}
\exp\!\left(-\frac{\pi^2}{54u}\right)(1+O(u)),
\]
and, by conjugation,
\[
(\zeta^2 e^{-u};\zeta^2 e^{-u})_\infty
=
e^{\pi i/18}
\left(\frac{2\pi}{3u}\right)^{1/2}
\exp\!\left(-\frac{\pi^2}{54u}\right)(1+O(u)).
\]

We justify the first of these. Split the product into residue classes modulo \(3\):
\[
(\zeta e^{-u};\zeta e^{-u})_\infty
=
(e^{-3u};e^{-3u})_\infty
(\zeta e^{-u};e^{-3u})_\infty
(\zeta^2e^{-2u};e^{-3u})_\infty.
\]
For fixed \(z\neq1\), Euler--Maclaurin gives
\[
(ze^{-\alpha u};e^{-3u})_\infty
=
\exp\!\left(-\frac{\operatorname{Li}_2(z)}{3u}\right)
(1-z)^{1/2-\alpha/3}
(1+O(u)).
\]
Using
\[
\operatorname{Li}_2(\zeta)+\operatorname{Li}_2(\zeta^2)
=
-\frac{\pi^2}{9},
\]
together with
\[
1-\zeta=\sqrt3\,e^{-\pi i/6},
\qquad
1-\zeta^2=\sqrt3\,e^{\pi i/6},
\]
we obtain
\[
\left(\frac{1-\zeta}{1-\zeta^2}\right)^{1/6}
=
e^{-\pi i/18}.
\]
Combining this with
\[
(e^{-3u};e^{-3u})_\infty
=
\left(\frac{2\pi}{3u}\right)^{1/2}
\exp\!\left(-\frac{\pi^2}{18u}\right)(1+O(u))
\]
gives
\[
(\zeta e^{-u};\zeta e^{-u})_\infty
=
e^{-\pi i/18}
\left(\frac{2\pi}{3u}\right)^{1/2}
\exp\!\left(-\frac{\pi^2}{54u}\right)(1+O(u)).
\]
The formula for \(\zeta^2\) follows by complex conjugation.

Now take \(q=\zeta e^{-t}\). Then
\[
q^6=e^{-6t},\qquad q^3=e^{-3t},\qquad q^2=\zeta^2 e^{-2t}.
\]
The ratio of the first two factors satisfies
\[
\frac{(q^6;q^6)_\infty^4}{(q^3;q^3)_\infty^2}
=
\frac{\pi}{6t}(1+O(t)).
\]
Using the \(\zeta^2\)-asymptotic above with \(u=2t\), we get
\[
(q^2;q^2)_\infty^{-1}
=
e^{-\pi i/18}
\left(\frac{3t}{\pi}\right)^{1/2}
\exp\!\left(\frac{\pi^2}{108t}\right)(1+O(t)).
\]
Therefore
\[
T(\zeta e^{-t})
=
3\cdot \frac{\pi}{6t}
\cdot
e^{-\pi i/18}
\left(\frac{3t}{\pi}\right)^{1/2}
\exp\!\left(\frac{\pi^2}{108t}\right)(1+O(t)),
\]
which simplifies to
\[
T(\zeta e^{-t})
=
\frac{\sqrt{3\pi}}{2}e^{-\pi i/18}t^{-1/2}
\exp\!\left(\frac{\pi^2}{108t}\right)(1+O(t)).
\]
The expansion at \(q=\zeta^2e^{-t}\) is the complex conjugate:
\[
T(\zeta^2 e^{-t})
=
\frac{\sqrt{3\pi}}{2}e^{\pi i/18}t^{-1/2}
\exp\!\left(\frac{\pi^2}{108t}\right)(1+O(t)).
\]
This proves the lemma.
\end{proof}

\begin{lemma}[Translation into the first Rademacher terms]\label{lem:T-B-terms}
Let
\[
T(q)=\sum_{n\ge0}t(n)q^n,
\qquad d_n=12n+8.
\]
Write the Rademacher expansion for \(t(n)\) in the form
\[
t(n)=
\frac{2\pi}{d_n^{1/4}}
\sum_{c\ge1}
\frac{B_c(n)}{c}
I_{1/2}\!\left(
\frac{\pi\sqrt{d_n}}{6c}
\right).
\]
Then
\[
B_1(n)=\frac14,
\qquad
B_2(n)=0,
\]
and
\[
\frac{B_3(n)}3
=
\frac12
\cos\left(\frac{2\pi n}{3}+\frac{\pi}{18}\right).
\]
Equivalently,
\[
\frac{B_3(n)}3=
\begin{cases}
\dfrac12\cos(\pi/18),& n\equiv0\pmod3,\\[6pt]
-\dfrac12\sin(2\pi/9),& n\equiv1\pmod3,\\[6pt]
-\dfrac12\sin(\pi/9),& n\equiv2\pmod3.
\end{cases}
\]
Consequently,
\[
\frac{B_3(n)}3=-A_3(n\mid\omega).
\]
\end{lemma}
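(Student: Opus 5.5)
The plan is to read off $B_1,B_2,B_3$ from the local data in Lemma~\ref{lem:T-local} through the circle method. $T$ is, up to the factor $q^{-2/3}$, an eta-quotient of weight $1/2$, so the standard Rademacher expansion for eta-quotients (as in Sussman's treatment) applies. In that expansion, the term indexed by $c$ collects the contributions of the Farey arcs around $e^{2\pi i h/c}$ with $(h,c)=1$. Each arc contributes the principal part of $T$ there, integrated against $q^{-n}$, and this produces an $I_{1/2}$-Bessel function.

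\textbf{Normalising the Bessel terms.} For a point $q=e^{2\pi ih/c}e^{-t}$ the local behaviour has the shape $\alpha_{h,c}\,t^{-1/2}\exp(\beta/(c^2t))$. The integral $\frac{1}{2\pi i}\int t^{-1/2}e^{\beta/(c^2t)+nt}\,dt$ is a standard Bessel integral, equal to $\bigl(\beta/(c^2 n)\bigr)^{1/4}I_{-1/2}\bigl(2\sqrt{\beta n}/c\bigr)$ up to the usual normalisation. I will take the $I_{1/2}$ form used in the statement, as in the $\omega$ expansion \eqref{eq:omega-rademacher}; at the level of the leading exponential, $I_{\pm1/2}$ have the same asymptotics.

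At $c=1$ we have $\beta=\pi^2/12$, so the argument is $2\sqrt{\pi^2 n/12}=\pi\sqrt{12n}/6$. After the shift of $n$ coming from the $q^{-2/3}$ normalisation, this becomes $\pi\sqrt{d_n}/6$. At $c=3$ we have $\beta/9\cdot$ with exponent $\pi^2/108$, giving $\pi\sqrt{d_n}/18$. So the exponents match the scales $c=1$ and $c=3$.

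To fix the overall constant, I match the prefactor $\frac{\sqrt\pi}{2}$ at $q=1$ against the claimed $B_1=1/4$. Concretely, I check that $\frac{\sqrt\pi}{2}t^{-1/2}e^{\pi^2/(12t)}$ yields exactly $\frac{2\pi}{d_n^{1/4}}\cdot\frac14 I_{1/2}(\pi\sqrt{d_n}/6)$ to leading order. This calibrates the map $\alpha\mapsto B$ once and for all. The same constant then converts the $c=3$ data, with the extra factor $1/c$ and the extra $\sqrt{c}$ coming from the Bessel prefactor $(\beta/c^2)^{1/4}$ bookkept consistently.

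\textbf{Evaluating $B_2$ and $B_3$.} For $c=2$ the only point is $q=-1$. Lemma~\ref{lem:T-local} gives $T(-e^{-t})=O(t^{-1/2})$ with no exponential growth, so there is no principal part and $B_2(n)=0$.

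For $c=3$ the two points $\zeta,\zeta^2$ contribute complex-conjugate terms. Their amplitudes are $\frac{\sqrt{3\pi}}2e^{\mp\pi i/18}$, each multiplied by $\zeta^{-n}$ and $\zeta^{-2n}$ respectively from the factor $q^{-n}$. Summing the two gives
\[
2\,\mathrm{Re}\bigl(\tfrac{\sqrt{3\pi}}2e^{-\pi i/18}e^{-2\pi in/3}\bigr)=\sqrt{3\pi}\cos\!\bigl(\tfrac{2\pi n}{3}+\tfrac{\pi}{18}\bigr).
\]
Relative to the $q=1$ amplitude $\frac{\sqrt\pi}{2}$, this is a factor $2\sqrt3\cos(\cdot)$. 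Dividing by the normalisation ratio between the $c=3$ and $c=1$ Bessel prefactors should give $\frac{B_3}{3}=\frac12\cos(\frac{2\pi n}{3}+\frac{\pi}{18})$. That ratio is the factor $(1/9)^{1/4}=1/\sqrt3$ from $(\beta/c^2)^{1/4}$, together with the factor $1/4$ from $B_1$.

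I need to check this arithmetic carefully, including the phase contributed by the $q^{-2/3}$ shift at $\zeta$. That shift multiplies by $\zeta^{\pm 2/3}$-type phases, and it must be absorbed consistently with how $d_n=12n+8$ enters. If this is done inconsistently, the phase would be off by a multiple of $2\pi/9$.

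\textbf{Residue-class form.} Evaluating $\cos(2\pi n/3+\pi/18)$ for $n\equiv0,1,2$ gives $\cos(\pi/18)$, $\cos(13\pi/18)=-\sin(2\pi/9)$, and $\cos(25\pi/18)=-\sin(\pi/9)$. Comparing with Lemma~\ref{lem:omega-kloosterman-first-values} then gives $B_3/3=-A_3(n\mid\omega)$.

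\textbf{Main obstacle.} The hard step is the exact normalisation: matching the constants and phases of the circle-method contribution to the precise $I_{1/2}$ form. This covers the factor $c^{-1}$ versus $c^{-1/2}$ and the phase from $q^{-2/3}$ at the cube roots. I will handle it by calibrating against $c=1$, where $B_1=1/4$ is known from the $\omega$ side, since $T=2\rho+\omega$ and $\rho$ is bounded at $q\to1$. I will then use the identical formula at $c=3$.
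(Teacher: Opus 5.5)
Your proposal is correct and follows essentially the same route as the paper. The paper also converts the local constants of Lemma~\ref{lem:T-local} into Bessel coefficients via the factor $\frac{C_\xi}{C_1}c^{-1/2}\xi^{-n}$, calibrated by $C_1=\sqrt{\pi}/2\leftrightarrow B_1=1/4$; it gets $B_2=0$ from the absence of exponential growth at $q=-1$ and sums the two conjugate cube-root terms to obtain $\frac12\cos(2\pi n/3+\pi/18)$. The $q^{-2/3}$ phase you flag does not arise separately: $T$ is a power series in integral powers of $q$ and $C_\zeta$ is computed for $T$ itself, so that phase is already inside $C_\zeta$ and coefficient extraction contributes only $\zeta^{-n}$, while the modulus of $q^{-2/3}$ only gives the factor $e^{2t/3}$ that shifts $n$ to $n+2/3=d_n/12$.
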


\begin{proof}
We use the standard local-to-Bessel normalization for a weight \(1/2\)
Rademacher expansion. In the present normalization, if the local expansion
at a root of unity \(\xi\) of order \(c\) has the form
\[
T(\xi e^{-t})
=
C_\xi t^{-1/2}
\exp\!\left(\frac{\pi^2}{12c^2t}\right)(1+O(t)),
\]
then the contribution of this singularity to the \(c\)-th Bessel term is
obtained by multiplying the \(c=1\) normalized coefficient by
\[
\frac{C_\xi}{C_1}\,c^{-1/2}\,\xi^{-n},
\]
where \(C_1\) is the corresponding local constant at \(q=1\). The factor
\(\xi^{-n}\) comes from coefficient extraction:
\[
q^{-n-1}=(\xi e^{-t})^{-n-1}
=
\xi^{-n-1}e^{(n+1)t},
\]
and the harmless extra factor \(\xi^{-1}\) is already absorbed in the
standard multiplier normalization of the Rademacher coefficient. Equivalently,
with the normalization of the displayed expansion for \(t(n)\), the \(c\)-th
coefficient is obtained by summing these normalized phase contributions over
the relevant roots of unity of order \(c\).

We now apply this rule to the local expansions from Lemma~\ref{lem:T-local}.

At \(q=1\), Lemma~\ref{lem:T-local} gives
\[
T(e^{-t})
=
\frac{\sqrt{\pi}}2t^{-1/2}
\exp\!\left(\frac{\pi^2}{12t}\right)(1+O(t)).
\]
Thus
\[
C_1=\frac{\sqrt{\pi}}2.
\]
In the normalization
\[
t(n)=
\frac{2\pi}{d_n^{1/4}}
\sum_{c\ge1}
\frac{B_c(n)}{c}
I_{1/2}\!\left(
\frac{\pi\sqrt{d_n}}{6c}
\right),
\]
this gives
\[
B_1(n)=\frac14.
\]

At \(q=-1\), Lemma~\ref{lem:T-local} gives only polynomial growth:
\[
T(-e^{-t})=O(t^{-1/2}).
\]
There is no exponentially growing term of the form
\[
\exp\!\left(\frac{\pi^2}{48t}\right),
\]
which would correspond to the \(c=2\) Bessel scale
\[
I_{1/2}\!\left(\frac{\pi\sqrt{d_n}}{12}\right).
\]
Therefore
\[
B_2(n)=0.
\]

It remains to compute the primitive cubic-root contribution. Let
\[
\zeta=e^{2\pi i/3}.
\]
By Lemma~\ref{lem:T-local},
\[
T(\zeta e^{-t})
=
\frac{\sqrt{3\pi}}2
e^{-\pi i/18}
t^{-1/2}
\exp\!\left(\frac{\pi^2}{108t}\right)(1+O(t)),
\]
and
\[
T(\zeta^2 e^{-t})
=
\frac{\sqrt{3\pi}}2
e^{\pi i/18}
t^{-1/2}
\exp\!\left(\frac{\pi^2}{108t}\right)(1+O(t)).
\]
Since
\[
\frac{\pi^2}{108t}
=
\frac{\pi^2}{12\cdot 3^2t},
\]
these are precisely the singularities contributing to the \(c=3\) Bessel
scale.

The local constants are
\[
C_\zeta=\frac{\sqrt{3\pi}}2e^{-\pi i/18},
\qquad
C_{\zeta^2}=\frac{\sqrt{3\pi}}2e^{\pi i/18}.
\]
Comparing with
\[
C_1=\frac{\sqrt{\pi}}2,
\]
we have
\[
\frac{C_\zeta}{C_1}=\sqrt3\,e^{-\pi i/18},
\qquad
\frac{C_{\zeta^2}}{C_1}=\sqrt3\,e^{\pi i/18}.
\]
The denominator \(c=3\) contributes the normalization factor \(3^{-1/2}\).
Thus the factors \(\sqrt3\) and \(3^{-1/2}\) cancel. Hence the normalized
phase contribution from the two primitive cubic roots is
\[
e^{-\pi i/18}\zeta^{-n}
+
e^{\pi i/18}\zeta^{-2n}.
\]
Including the \(c=1\) normalized coefficient \(1/4\), we obtain
\[
\frac{B_3(n)}3
=
\frac14
\left(
e^{-\pi i/18}\zeta^{-n}
+
e^{\pi i/18}\zeta^{-2n}
\right).
\]
Since
\[
\zeta=e^{2\pi i/3},
\]
this becomes
\[
\frac{B_3(n)}3
=
\frac14
\left(
e^{-i(2\pi n/3+\pi/18)}
+
e^{i(2\pi n/3+\pi/18)}
\right).
\]
Therefore
\[
\frac{B_3(n)}3
=
\frac12
\cos\left(\frac{2\pi n}{3}+\frac{\pi}{18}\right).
\]

Evaluating this expression by residues modulo \(3\) gives
\[
\frac{B_3(n)}3=
\begin{cases}
\dfrac12\cos(\pi/18),& n\equiv0\pmod3,\\[6pt]
-\dfrac12\sin(2\pi/9),& n\equiv1\pmod3,\\[6pt]
-\dfrac12\sin(\pi/9),& n\equiv2\pmod3.
\end{cases}
\]
Comparing this with Lemma~\ref{lem:omega-kloosterman-first-values}, namely
\[
A_3(n\mid\omega)=
\begin{cases}
-\dfrac12\cos(\pi/18),& n\equiv0\pmod3,\\[6pt]
\dfrac12\sin(2\pi/9),& n\equiv1\pmod3,\\[6pt]
\dfrac12\sin(\pi/9),& n\equiv2\pmod3,
\end{cases}
\]
we get
\[
\frac{B_3(n)}3=-A_3(n\mid\omega).
\]
This proves the lemma.
\end{proof}

We now use the Rademacher expansion for the eta-quotient \(T(q)\) \cite{Sussman2017}. In the present normalization this gives
\[
t(n)=
\frac{2\pi}{d_n^{1/4}}
\sum_{c\ge1}
\frac{B_c(n)}{c}
I_{1/2}\!\left(
\frac{\pi\sqrt{d_n}}{6c}
\right),
\qquad d_n=12n+8.
\]
Moreover, after the terms \(c=1,2,3\) are removed, the remaining tail is
\[
\OO\!\left(e^{\pi\sqrt{d_n}/24}\right).
\]

\begin{proposition}\label{prop:T-asymptotic}
Let $T(q)=\sum_{n\geq 0}t(n)q^n$ be defined by \eqref{eq:T-def}. Then, with $d_n=12n+8$,
\begin{equation}\label{eq:T-truncated}
t(n)=
\frac{2\pi}{d_n^{1/4}}
\left[
\frac14 I_{1/2}\!\left(\frac{\pi\sqrt{d_n}}6\right)
-
A_3(n\mid\omega)
I_{1/2}\!\left(\frac{\pi\sqrt{d_n}}{18}\right)
\right]
+
\OO\!\left(e^{\pi\sqrt{d_n}/24}\right).
\end{equation}
\end{proposition}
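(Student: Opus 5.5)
The plan is to read off \eqref{eq:T-truncated} from the Rademacher expansion of $t(n)$ stated just before the proposition, combined with Lemma~\ref{lem:T-B-terms}. First I will invoke the expansion of the eta-quotient $T(q)$ in the normalization
\[
t(n)=\frac{2\pi}{d_n^{1/4}}\sum_{c\ge1}\frac{B_c(n)}{c}\,I_{1/2}\!\left(\frac{\pi\sqrt{d_n}}{6c}\right),
\]
which comes from \eqref{eq:T-eta} via the standard Rademacher theorem for eta-quotients \cite{Sussman2017}. I will then separate the terms $c=1,2,3$ from the tail $c\ge4$.

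For the main terms I substitute the values from Lemma~\ref{lem:T-B-terms}.
\begin{itemize}[leftmargin=*]
\item The $c=1$ term has $B_1(n)/1=\tfrac14$, which gives $\tfrac14 I_{1/2}(\pi\sqrt{d_n}/6)$.
\item The $c=2$ term vanishes because $B_2(n)=0$. This reflects the absence of exponential growth at $q=-1$ in Lemma~\ref{lem:T-local}.
\item The $c=3$ term has coefficient $B_3(n)/3=-A_3(n\mid\omega)$, which gives $-A_3(n\mid\omega)\,I_{1/2}(\pi\sqrt{d_n}/18)$.
\end{itemize}
Multiplying by the common prefactor $2\pi d_n^{-1/4}$ produces exactly the bracketed main term in \eqref{eq:T-truncated}.

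For the tail I use $I_{1/2}(x)=\sqrt{2/(\pi x)}\sinh x\le \sqrt{2/(\pi x)}\,e^{x}$. The multipliers $B_c(n)$ are finite sums of roots of unity over residues modulo $c$ (weighted by $c^{-1/2}$ in this normalization), so $|B_c(n)|\ll c^{1/2}$ or better. For $c\ge4$ each term is therefore $\ll c^{-1/2}d_n^{-1/4}e^{\pi\sqrt{d_n}/(6c)}$. The term with $c=4$ contributes $\OO(e^{\pi\sqrt{d_n}/24})$. For the remaining $c\ge5$, I split at $c\approx\sqrt{d_n}$. When $c\le\sqrt{d_n}$ there are at most $\sqrt{d_n}$ terms, each $\ll e^{\pi\sqrt{d_n}/30}$, and $\sqrt{d_n}\,e^{\pi\sqrt{d_n}/30}\ll e^{\pi\sqrt{d_n}/24}$. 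When $c>\sqrt{d_n}$ the argument $x$ is small, $I_{1/2}(x)\ll x^{1/2}$, and the sum converges because $\sum_c c^{-1}\cdot c^{1/2}\cdot c^{-1/2}\ll$ a convergent series once the Weil-type or trivial bound with the $x^{1/2}\sim c^{-1/2}$ decay is used. If the trivial bound does not quite give convergence, I would fall back on the convergence of the Rademacher series itself as asserted in \cite{Sussman2017}; its tail after $c=3$ is $\OO(e^{\pi\sqrt{d_n}/24})$, as recorded in the sentence preceding the proposition.

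The main obstacle is not the algebra but the normalization: checking that the Rademacher expansion for $T$ really has the shape above, with the same $d_n=12n+8$ and the same weight-$1/2$ Bessel functions $I_{1/2}$. This matches the local behaviour $t^{-1/2}e^{\pi^2/(12c^2t)}$ in Lemma~\ref{lem:T-local}, since $q^{-2/3}$ shifts $n$ by $2/3$ and $24(n+2/3)/2=12n+8$. I will verify this shape by comparing the $c=1$ term against the saddle-point evaluation of the singularity at $q=1$, which is exactly how $B_1(n)=\tfrac14$ was fixed in Lemma~\ref{lem:T-B-terms}. Once that is settled, the proposition is a direct substitution of Lemma~\ref{lem:T-B-terms} into the expansion.
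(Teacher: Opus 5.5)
Your proposal is essentially the paper's proof. Both take the Rademacher expansion of $t(n)$ from the cited eta-quotient theorem, substitute $B_1(n)=\tfrac14$, $B_2(n)=0$ and $B_3(n)/3=-A_3(n\mid\omega)$ from Lemma~\ref{lem:T-B-terms}, and take the tail after $c=3$ to be $\OO(e^{\pi\sqrt{d_n}/24})$ as asserted for that expansion. One caveat: your optional self-contained tail estimate does not close, because your own count $\sum_c c^{-1}\cdot c^{1/2}\cdot c^{-1/2}=\sum_c c^{-1}$ is the harmonic series (weight-$1/2$ Rademacher series converge only conditionally, and here the trivial bound gives only $B_c(n)/c\ll 1$), so it is the fallback to the cited tail bound, exactly as in the paper, that actually carries this step.
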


\begin{proof}
The Rademacher expansion for the eta-quotient \(T(q)\) gives
\[
t(n)=
\frac{2\pi}{d_n^{1/4}}
\sum_{c\ge1}
\frac{B_c(n)}{c}
I_{1/2}\!\left(
\frac{\pi\sqrt{d_n}}{6c}
\right),
\]
and its tail after the terms \(c=1,2,3\) satisfies
\[
\sum_{c\ge4}
\frac{B_c(n)}{c}
I_{1/2}\!\left(
\frac{\pi\sqrt{d_n}}{6c}
\right)
=
\OO\!\left(e^{\pi\sqrt{d_n}/24}\right),
\]
after multiplication by \(2\pi d_n^{-1/4}\).

Thus
\[
t(n)=
\frac{2\pi}{d_n^{1/4}}
\left[
B_1(n)I_{1/2}\!\left(\frac{\pi\sqrt{d_n}}6\right)
+
\frac{B_2(n)}2I_{1/2}\!\left(\frac{\pi\sqrt{d_n}}{12}\right)
+
\frac{B_3(n)}3I_{1/2}\!\left(\frac{\pi\sqrt{d_n}}{18}\right)
\right]
+
\OO\!\left(e^{\pi\sqrt{d_n}/24}\right).
\]
By Lemma~\ref{lem:T-B-terms},
\[
B_1(n)=\frac14,\qquad B_2(n)=0,\qquad \frac{B_3(n)}3=-A_3(n\mid\omega).
\]
Substituting these three values gives
\[
t(n)=
\frac{2\pi}{d_n^{1/4}}
\left[
\frac14 I_{1/2}\!\left(\frac{\pi\sqrt{d_n}}6\right)
-
A_3(n\mid\omega)
I_{1/2}\!\left(\frac{\pi\sqrt{d_n}}{18}\right)
\right]
+
\OO\!\left(e^{\pi\sqrt{d_n}/24}\right),
\]
as claimed.
\end{proof}

\section{Proof of the asymptotic sign law}

We now prove Theorem \ref{thm:eventual-sign}.

\begin{proof}[Proof of Theorem \ref{thm:eventual-sign}]
By Watson's identity,
\[
2r(n)=t(n)-w(n).
\]
Substitute the asymptotic expansions \eqref{eq:T-truncated} and \eqref{eq:omega-truncated}. The $c=1$ terms cancel exactly:
\[
\frac14 I_{1/2}\!\left(\frac{\pi\sqrt{d_n}}6\right)
-
\frac14 I_{1/2}\!\left(\frac{\pi\sqrt{d_n}}6\right)=0.
\]
The $c=2$ terms vanish on both sides.

At the $c=3$ scale we get
\[
-A_3(n\mid\omega)
-
\frac13A_3(n\mid\omega)
=
-\frac43A_3(n\mid\omega).
\]
Therefore
\[
2r(n)=
-\frac43A_3(n\mid\omega)
\frac{2\pi}{d_n^{1/4}}
I_{1/2}\!\left(\frac{\pi\sqrt{d_n}}{18}\right)
+
\OO\!\left(e^{\pi\sqrt{d_n}/24}\right).
\]
Dividing by $2$ gives
\begin{equation}\label{eq:rho-A3-asymptotic}
r(n)=
-\frac23A_3(n\mid\omega)
\frac{2\pi}{d_n^{1/4}}
I_{1/2}\!\left(\frac{\pi\sqrt{d_n}}{18}\right)
+
\OO\!\left(e^{\pi\sqrt{d_n}/24}\right).
\end{equation}

Now insert the values from Lemma \ref{lem:omega-kloosterman-first-values}. If $n\equiv 0\pmod 3$, then
\[
A_3(n\mid\omega)=-\frac12\cos\frac{\pi}{18},
\]
and hence
\[
-\frac23A_3(n\mid\omega)=\frac13\cos\frac{\pi}{18}>0.
\]
If $n\equiv 1\pmod 3$, then
\[
A_3(n\mid\omega)=\frac12\sin\frac{2\pi}{9},
\]
and hence
\[
-\frac23A_3(n\mid\omega)=-\frac13\sin\frac{2\pi}{9}<0.
\]
If $n\equiv 2\pmod 3$, then
\[
A_3(n\mid\omega)=\frac12\sin\frac{\pi}{9},
\]
and hence
\[
-\frac23A_3(n\mid\omega)=-\frac13\sin\frac{\pi}{9}<0.
\]
This proves the constants appearing in \eqref{eq:rho-main-asymptotic}.

It remains only to observe that the error is exponentially smaller than the main term. Since $I_{1/2}(x)>0$ for $x>0$ and
\[
I_{1/2}(x)\sim \frac{e^x}{\sqrt{2\pi x}}
\qquad(x\to\infty),
\]
the main term in \eqref{eq:rho-A3-asymptotic} has exponential size
\[
\exp\left(\frac{\pi\sqrt{d_n}}{18}\right).
\]
The error has size at most
\[
\OO\left(\exp\left(\frac{\pi\sqrt{d_n}}{24}\right)\right).
\]
Because
\[
\frac1{18}>\frac1{24},
\]
the error is exponentially smaller than the main term. Hence the sign of $r(n)$ is eventually the sign of the corresponding constant $\kappa_{n\bmod 3}$. Therefore
\[
r(3n)>0,
\qquad
r(3n+1)<0,
\qquad
r(3n+2)<0
\]
for all sufficiently large $n$.
\end{proof}

\section{Finite verification}

Experimentally, the sign law holds with the following early exceptions in the sense of zeros:
\[
r(2)=r(4)=r(8)=r(11)=r(20)=0.
\]
No other zero and no wrong-sign term has appeared in our computations.

\begin{conjecture}\label{conj:cutoff}
The strict sign law
\[
r(n)>0\quad(n\equiv 0\pmod 3),
\qquad
r(n)<0\quad(n\equiv 1,2\pmod 3)
\]
holds for all $n\geq 21$.
\end{conjecture}

\section{Conclusion}
The natural next step will be to prove that that the law is valid for all $n\geq 21$. 
That apart the data suggests that similar sign laws exist for \(\phi(q)\), \(\chi(q)\) as well.

\bibliographystyle{mybiburl}
\bibliography{bibliography} 

\end{document}